\def\@seccntformat#1{\csname the#1\endcsname.\ } % the column after a section number
\def\@biblabel#1{#1.} % number style in the bibliography list
\date{}
\newenvironment{proof}[1][\hspace{-1.0ex}]%
{\par\addvspace{1mm}{\sc Proof\hspace{1.0ex}{#1}.} }%
{\quad$\blacktriangle$\par\addvspace{1mm}}
\newif\ifNoRemark
\def\addtheorem#1#2#3#4{
\ifthenelse{\equal{#2}{}}{}%
{\ifthenelse{\expandafter\isundefined\csname the#2\endcsname}{\newcounter{#2}}{}}
\newenvironment{#1}[1][\global\NoRemarktrue]% No Remark by default
{\par\addvspace{2mm plus 0.5mm minus 0.2mm}\noindent % new paragraph without indent
{\bf #3}\ifthenelse{\equal{#2}{}}{}%
{\refstepcounter{#2}{\bf ~\csname the#2\endcsname}}%
{\bf \vphantom{##1}\ifNoRemark.\ \else\ (##1).\fi}\begingroup #4}%
   % if the optional parameter contains a comment then it will be printed in parenthesis;
   % to change this, replace (##1)
{\endgroup\par\addvspace{1mm plus 0.5mm minus 0.2mm}\global\NoRemarkfalse}
\expandafter\newcommand\csname b#1\endcsname{\begin{#1}}
\expandafter\newcommand\csname e#1\endcsname{\end{#1}}
}
\title{On completely regular codes of covering radius $1$\\
in the halved hypercubes%
\thanks{This work was funded by the Russian Science Foundation under grant 18-11-00136.}}
\author{Denis S. Krotov, Ivan Yu. Mogilnykh, Anastasia Yu. Vasil'eva
\thanks{Sonolev Institute of Mathematics, pr. Akademika Koptyuga 4, Novosibirsk 630090, Russia.}
}
\newcommand\Hf[1]{\frac12Q_{#1}}
\newcommand\Hm[1]{Q_{#1}}
\newcommand{\Id}{\mathrm{Id}}
\newcommand{\Mx}[4]{\left[ \begin{array}{cc} #1&#2 \\ #3&#4 \end{array}\right]}
\newcommand{\Mxx}[4]{\left[ \begin{array}{@{}c@{\,}c@{}} #1&#2 \\ #3&#4 \end{array}\right]}
\newcommand{\Mxxx}[4]{\left[ \begin{array}{@{\,}c@{\ }c@{\,}} #1&#2 \\ #3&#4 \end{array}\right]}
\begin{document}

\maketitle
\begin{abstract}
We consider constructions of covering-radius-$1$ completely regular codes, or, equivalently,
equitable $2$-partitions, of halved $n$-cubes.

Keywords: completely regular code, equitable  partition, regular partition, partition design, perfect coloring, halved hypercube.
\end{abstract}

\section{Definitions.}
An equitable $k$-partition  of the graph is an ordered partition
$C=(C_0,C_1,\ldots,C_{k-1})$ of the vertex set such that the
number of neighbors in $C_j$ of a vertex from $C_i$ is some
constant $S_{ij}$, $i,j=0,\ldots,k-1$; the matrix
$S=\{S_{ij}\}_{i,j\in \{0,\ldots,k-1\}}$ is called the
\emph{quotient matrix} (sometimes, the intersection matrix, or the
parameter matrix). The elements of
 the partition $(C_0,C_1,\ldots,C_{k-1})$ are called \emph{cells}.

We will write $C(\bar v)=i$ if $\bar v\in C_i$ and identify a
$k$-partition with the corresponding $k$-valued function on the
vertices.

A set $C$ (code) of vertices of a graph is called \emph{completely regular}
if the partition $(C^{(0)},C^{(1)},\ldots,C^{(\rho)})$ is equitable,
where $C^{(d)}$ is the set of vertices at distance $d$ from $C$ and $\rho$
(the \emph{covering radius} of $C$) is the maximum $d$ for which $C^{(d)}$
is nonempty.

A connected graph is called distance regular if every singleton
(one-vertex set) is completely regular with the same quotient matrix.

The \emph{$n$-cube} (or $n$-dimensional \emph{hypercube}) $\Hm{n}$
is the graph on the binary words of length $n$
(also treated as vectors over the binary field GF$(2)$)
where two words are adjucent iff they differ in exactly one position.

The \emph{halved $n$-cube} $\Hf{n}$ (of $\Hf{n}'$) is the graph
on the binary words of length $n$ with even number of ones
(or odd for $\Hf{n}'$;
the resulting graph is the same up to isomorphism)
where two words $\bar x$, $\bar y$ are adjacent (we write $\bar x\sim\bar y$) iff they differ in exactly two positions.

An $s$-face of the halved $n$-cube is the set of $2^{s-1}$
vertices that have the same values in some $n-s$ ``fixed''
coordinates. The remaining coordinates are called \emph{free} for
the $s$-face.

The eigenvalues of the halved $n$-cube are denoted
$\theta_0(n)$, \ldots, $\theta_{\lfloor n/2 \rfloor}(n)$,
in the decreasing order, $\theta_i(n)=((n-2i)^2-n)/2$.

In a graph, a set of mutually adjacent vertices is called a \emph{clique}.
In a distance-regular graph with the minimum eigenvalue $\theta_{\min}$,
a clique of size $1-k/\theta_{\min}$ is called a \emph{Delsatre clique}.
In $\Hf{n}$ with even $n$, any Delsarte clique have size $n$.
For odd $n$, $\Hf{n}$ has no Delsarte cliques.
%=========================================
\section{Necessary conditions.}\label{s:Nec}\label{s:necessary}
In this section, we collect the necessary conditions for a
two-by-two matrix to be a quotient matrix $S=\Mxx{a}{b}{c}{d}$ of
an equitable $2$-partition $C=(C_0,C_1)$.

\begin{enumerate}
 \item Firstly, all elements are nonnegative integers, $b$ and $c$ are positive,
$a+b$ and $c+d$ equals the degree of the graph (for $\Hf{n}$, the
degree is $n(n-1)/2$).
\item Next, by counting the edges between $C_0$ and $C_1$, we find $|C_0|/|C_1|=b/c$.
It follows that $\frac{b+c}{\mathrm{gcd}(b,c)}$ divides the number
of vertices of the graph. For $\Hf{n}$, the number of vertices is
$2^{n-1}$; so, $\frac{b+c}{\mathrm{gcd}(b,c)}$ is a power of $2$.
\item Next, the eigenvalues of the quotient matrix are eigenvalues of the graph.
For $\Hf{n}$, this means that $a-c=d-b=((n-2i)^2-n)/2$, $i\in
\{0,\ldots,\lfloor n/2 \rfloor\}$.
\item Given a completely regular code $D$,
if we know the numbers $|C_i\cap D|$ for all $i\{0,\ldots,k-1\}$,
then we can calculate $|C_i\cap D^{(d)}|$ for any $i\in
\{0,\ldots,k-1\}$ and
 $d\in\{0,\ldots,\rho\}$ (see e.g. \cite{AvgMog:J63J73}) via the quotient
matrices of $C$ and $D$. The collection $\{|C_i\cap D^{(d)}|: i\in
\{0,\ldots,k-1\},
 d\in\{0,\ldots,\rho\}\}$ is called
the \emph{weight distribution} of the partition $C$ with respect
to $D$. An equitable partition $C$ necessarily possesses at least
one weight distribution with nonnegative integer elements. More on
calculation of weight distributions of equitable partitions in
distance regular graphs could be found in \cite{Kro:struct}.

The most important case is a distance regular graph when $D$ is a
singleton. For an equitable partition with the quotient matrix $S$
in $\Hf{n}$, we can conclude that all the matrices $S^{(0)}=\Id$,
$S^{(2)}=S$, $S^{(4)}$, \ldots, $S^{(2\lfloor n/2\rfloor)}$ are
related by the recursive identity
\begin{equation}\label{eq:2d}
 S\cdot S^{(i)} = {{n-i+2}\choose{2}} S^{(i-2)} + i(n-i)S^{(i)} +
{{i+2}\choose{2}} S^{(i+2)}
\end{equation}
(this formula is a partial case of \cite[(4)]{Kro:struct}) and
must consist of nonnegative integers.

\begin{example}\label{ex:1}
Consider $n=12$, $S=\Mx{4}{62}{2}{64}$. We find
$S^{(4)}=S^{(8)}=\Mx{-1}{496}{16}{479}$. Hence, there is no
equitable partition of $\Hf{12}$ with the quotient matrix $S$.
\end{example}

% Another important case of completely regular codes are the
% Delsarte cliques in distance regular graphs. In the graph $\Hf{n}$
% with even $n$, every maximum clique is a Delsarte clique. However,
% even for odd $n$, a maximum clique in $\Hf{n}$ is completely
% regular, see e.g.~\cite[Prop.\,8]{Suzuki2014}. Again, we show the
% concrete formulas. Assume that $C=(C_0,C_1)$ is an equitable
% $2$-partition of $\Hf{n}$ with the quotient matrix $S$, assume $K$
% is a maximum clique, and assume that $V^{(2d+1)}$ is the column
% vector of height $2$ consisting of the number of vertices of $C_0$
% at distance $d$ from $K$ and the number of vertices of $C_1$ at
% distance $d$ from $K$. Then
% \begin{equation}\label{eq:2d+1}
%  V\cdot V^{(i)} = {{n-i+2}\choose{2}} V^{(i-2)} + i(n-i)V^{(i)} +
% {{i+2}\choose{2}} V^{(i+2)}, \qquad i=3,5,\ldots.
% \end{equation}
% 
% \begin{example}\label{ex:2}
% Consider $n=10$, $S=\Mx{15}{30}{2}{43}$. Starting with
% $V^{(1)}=(0,10)^T$, $(1,9)^T$, $(2,8)^T$, \ldots, $(9,1)^T$,
% $(10,0)^T$, we find ...... Hence, there is no equitable partition
% of $\Hf{10}$ with the quotient matrix $S$.
% \end{example}
\end{enumerate}

%==================================
\section{Special cases.}\label{s:spec}
\subsection{Odd $n$.}\label{ss:odd}
\begin{theorem}\label{th:odd}
 Let $C=(C_0,C_1,\ldots,C_{k-1})$ be an equitable $k$-partition of $\Hf{n}$, where $n$ is odd.
 Let $C_i'$ be $C_i\cup(C_i+\bar 1).$ Then $C'=(C'_0,\ldots,C'_{k-1}\}$ is an equitable $k$-partition of $\Hm{n}$.
\end{theorem}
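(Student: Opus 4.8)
The plan is to exploit the antipodal automorphism $\phi\colon\bar v\mapsto\bar v+\bar 1$ of $\Hm n$ and to reduce the equitability of $C'$ to the ``maximal-distance'' distribution matrix of $C$, which is exactly the matrix $S^{(2\lfloor n/2\rfloor)}$ made available by item~4 of Section~\ref{s:necessary}. Write $E$ and $O$ for the sets of even- and odd-weight words, so that $V(\Hf n)=E$ and $V(\Hm n)=E\cup O$. First I would check that $C'$ is a genuine ordered partition of $V(\Hm n)$. Because $n$ is odd, adding $\bar 1$ reverses weight parity, so $\phi$ is a bijection from $E$ onto $O$; since $(C_0,\dots,C_{k-1})$ partitions $E$, the images $C_i+\bar 1=\phi(C_i)$ partition $O$, and hence $C_i'=C_i\cup(C_i+\bar 1)$ partition $E\cup O$. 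In particular $C_j'\cap E=C_j$ and $C_j'\cap O=C_j+\bar 1$, facts I will use repeatedly. It then remains to prove equitability and to identify the quotient matrix.

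The core is the following counting. Let $\bar e_\ell$ be the word with a single $1$ in position $\ell$, so the $\Hm n$-neighbors of a vertex $\bar u$ are $\bar u+\bar e_\ell$, $\ell=1,\dots,n$. Since $\Hm n$ is bipartite with parts $E,O$, all neighbors of $\bar u$ lie in the opposite part, and I would compute the number of them in $C_j'$ in two parity cases. If $\bar u\in C_i$ (even), its neighbors are odd, hence lie in $C_j'$ iff $\bar u+\bar e_\ell\in C_j+\bar 1$, i.e. iff $\bar u+(\bar 1+\bar e_\ell)\in C_j$; the count is thus $\#\{\ell:\bar u+(\bar 1+\bar e_\ell)\in C_j\}$. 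If $\bar u\in C_i+\bar 1$ (odd), set $\bar w=\bar u+\bar 1\in C_i$; the neighbors $\bar u+\bar e_\ell$ are even, lie in $C_j'$ iff in $C_j$, and $\bar u+\bar e_\ell=\bar w+(\bar 1+\bar e_\ell)$, so the count is $\#\{\ell:\bar w+(\bar 1+\bar e_\ell)\in C_j\}$. The key geometric observation is that $\bar 1+\bar e_\ell$ runs bijectively over all words of weight $n-1$ as $\ell$ runs over the positions; therefore in both cases the relevant neighbors, reduced into $E$, are exactly the $n$ even-weight words at Hamming distance $n-1$ from an even-weight word of $C_i$ (namely $\bar u$, respectively $\bar w$), each arising once.

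To finish, I would note that Hamming distance $n-1$ is halved-cube distance $(n-1)/2=\lfloor n/2\rfloor$, which for odd $n$ is the diameter of $\Hf n$. By item~4 of Section~\ref{s:necessary}, since $\Hf n$ is distance-regular and $C$ is equitable, the number of vertices at distance $\lfloor n/2\rfloor$ from a vertex of $C_i$ that lie in $C_j$ is the well-defined constant $(S^{(2\lfloor n/2\rfloor)})_{ij}$. Both parity computations above equal precisely this entry, so for every $\bar u\in C_i'$ the number of $\Hm n$-neighbors in $C_j'$ equals $(S^{(2\lfloor n/2\rfloor)})_{ij}$, independently of $\bar u$. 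Hence $C'$ is equitable with quotient matrix $S'=S^{(2\lfloor n/2\rfloor)}$; as a sanity check, the row sums of $S^{(2\lfloor n/2\rfloor)}$ count all maximal-distance vertices, which is $n=\deg\Hm n$.

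The step I expect to be the main obstacle is the geometric identification in the middle paragraph: verifying that, after applying $\phi$ where needed, the neighborhood of $\bar u$ in $\Hm n$ maps exactly and bijectively onto the maximal-distance layer around the even representative in $C_i$, and—crucially—that the even- and odd-parity vertices sharing a cell $C_i'$ produce the \emph{same} count, so that $C'$ is truly equitable rather than merely regular on each parity block. Both points rest on the oddness of $n$ (reversal of parity by $\phi$ and the coincidence $\lfloor n/2\rfloor=(n-1)/2$ being the diameter), so I would make explicit where that hypothesis enters.
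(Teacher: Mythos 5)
Your proof is correct and follows essentially the same route as the paper's: identify the $\Hm{n}$-neighbors of a vertex with the vertices at maximal halved-cube distance $(n-1)/2$ from its even-weight representative in $C_i$, then invoke the constancy of the distance distribution of an equitable partition in a distance-regular graph. You merely spell out the two parity cases and the quotient matrix $S^{(2\lfloor n/2\rfloor)}$ explicitly, which the paper leaves implicit.
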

\begin{proof}
 Consider a vertex $\bar x \in C_i+\bar 1$ for some $i$ (the case $\bar x \in C_i$ is similar).
 Its neighbors are exactly the vertices at distance $n-1$ from
 $\bar x +\bar 1$ in the graph $\Hm{n}$; at the same time, the vertices at distance $(n-1)/2$ from
 $\bar x +\bar 1\in C_i$ in the graph $\Hf{n}$.
 Since $C$ is an equitable partition and the graph $\Hf{n}$ is distance regular, the number of such vertices in $C_j$ for each $j$ is a constant
regardless of choice of $\bar x$ in $C_i+\bar 1$,
see e.g. \cite{Martin:PhD,Kro:struct}.
%(see Section~\ref{s:Nec}).
 By the definition,
we have an equitable partition of $\Hm{n}$.
\end{proof}
So, the existence of equitable partitions of the halved $n$-cube,
where $n$ is odd $n$,
with given putative quotient matrix $S$
is equivalent to the existence of equitable partitions of the $n$-cube
with some quotient matrix, uniquely defined from $S$ (for the case of $2$-partitions, this matrix can be easily found from the eigenvalue and the proportion between the partition cells).
We see that in this case, odd $n$, there is no sense to consider the problem of existence separately for the halved $n$-cube.
%==================================
\subsection{Even $n$, minimum eigenvalue.}\label{ss:even-min}
\begin{theorem}\label{th:min}
 A partition $(C_0,C_1)$ of $\Hf{n}$, $n$ even, is equitable with the quotient matrix $S$ and the eigenvalue $\theta_{n/2}(n)$
 if and only if $(C'_0,C'_1)$ is an equitable partition of $\Hm{n-1}$ with the
 quotient matrix $S'=\Mx{c-1}{n-c}{c}{n-c-1}$ for some $c$ and $S=(S'^2-\Id)/2+S'$.
\end{theorem}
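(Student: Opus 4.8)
The plan is to realize $\Hf{n}$ concretely as a graph on $\{0,1\}^{n-1}$ and to recognize its adjacency operator as a quadratic polynomial in the adjacency operator of $\Hm{n-1}$; the eigenvalue hypothesis will then be exactly what collapses a generalized eigenspace into a genuine one.

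First I would fix the bijection $\phi\colon \Hf{n}\to\{0,1\}^{n-1}$ that deletes the last coordinate. Since $n$ is even, the last coordinate of an even-weight word is determined by the first $n-1$, so $\phi$ is a bijection, and the given partition $(C_0,C_1)$ is carried to the partition $(C'_0,C'_1)$ (this is what $C'_i$ means here). I would then check that two even-weight words at Hamming distance $2$ are sent to words of $\{0,1\}^{n-1}$ at Hamming distance $1$ or $2$, and conversely; hence $\Hf{n}$ is isomorphic to the graph $G$ on $\{0,1\}^{n-1}$ in which $\bar u,\bar v$ are adjacent iff they differ in one or two positions. Writing $A=A(\Hm{n-1})$ and using the standard identity $A^2=(n-1)\mathrm{Id}+2A_2$ (with $A_2$ the distance-$2$ adjacency matrix of $\Hm{n-1}$), the adjacency matrix of $G$ becomes $\mathcal A = A + A_2 = \tfrac12 A^2 + A - \tfrac{n-1}{2}\mathrm{Id}$, a polynomial in $A$. (A check that $\mathcal A$ has Perron eigenvalue $\binom n2$ fixes the constant: the relation between the quotient matrices is $S = \tfrac12(S')^2 + S' - \tfrac{n-1}{2}\mathrm{Id}$.)

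Next I would translate ``equitable'' into ``invariant subspace'': $(C_0,C_1)$ is equitable for a symmetric operator precisely when $V=\langle\chi_0,\chi_1\rangle$ is invariant under it, the quotient matrix being the matrix of the induced action on $V$ in the basis of indicator vectors. Because $\mathcal A$ is a polynomial in $A$, $A$-invariance of $V$ immediately yields $\mathcal A$-invariance, which settles the reverse direction at once: from an equitable $(C'_0,C'_1)$ of $\Hm{n-1}$ with quotient matrix $S'=\Mx{c-1}{n-c}{c}{n-c-1}$ one substitutes to get that $(C_0,C_1)$ is equitable for $\Hf{n}$, and since $S'$ has non-principal eigenvalue $-1$, the non-principal eigenvalue of $S$ is $\tfrac12-1-\tfrac{n-1}{2}=-\tfrac n2=\theta_{n/2}(n)$, as required.

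The forward direction is the crux, and here the eigenvalue hypothesis does the real work. Given that $(C_0,C_1)$ is equitable for $\mathcal A$ with non-principal eigenvalue $\theta_{n/2}(n)=-\tfrac n2$, let $w\in V$ be the corresponding eigenvector, so $V=\langle\chi_0+\chi_1,\,w\rangle$. Substituting $\mathcal A w=-\tfrac n2 w$ into $\mathcal A=\tfrac12 A^2+A-\tfrac{n-1}{2}\mathrm{Id}$ gives $\tfrac12(A+\mathrm{Id})^2w=0$; since $A$ is real symmetric, hence diagonalizable, $(A+\mathrm{Id})^2w=0$ forces $Aw=-w$. The decisive point is that $-\tfrac n2$ is exactly the value making the quadratic $\tfrac12\lambda^2+\lambda-\tfrac{n-1}{2}=-\tfrac n2$ a perfect square $\tfrac12(\lambda+1)^2$; for any other eigenvalue this quadratic has two distinct roots and $V$ generally fails to be $A$-invariant. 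Thus $V=\langle\chi_0+\chi_1,\,w\rangle$ is $A$-invariant with $A$-eigenvalues $n-1$ and $-1$, so $(C'_0,C'_1)$ is equitable for $\Hm{n-1}$; matching the degree $n-1$, the non-principal eigenvalue $-1$, and the cell-size ratio $|C_0|/|C_1|$ (necessary condition~2) pins down $S'=\Mx{c-1}{n-c}{c}{n-c-1}$ for the appropriate integer $c$, and the polynomial substitution recovers the stated relation between $S$ and $S'$. I expect the diagonalizability step---turning the generalized-eigenvector identity $(A+\mathrm{Id})^2w=0$ into the honest eigenvector equation $Aw=-w$---to be the single place where it is essential that we work with the minimum eigenvalue rather than an arbitrary one.
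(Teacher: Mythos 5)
Your proof is correct and follows essentially the same route as the paper's: identify $\Hf{n}$ with the distance-$1$-or-$2$ graph on $\Hm{n-1}$, express its adjacency matrix as $\frac12(A+\Id)^2$ plus a multiple of $\Id$, and use the diagonalizability of $A$ to turn the minimum-eigenvalue relation $(A+\Id)^2w=0$ into $Aw=-w$, so that invariance of the span of the indicator vectors transfers in both directions. (Incidentally, your constant $-\frac{n-1}{2}\Id$ is the correct one: the identity displayed in the paper's proof, and the formula $S=(S'^2-\Id)/2+S'$ in the statement, carry a sign/constant typo, as one checks on the principal eigenvalue $n-1\mapsto n(n-1)/2$.)
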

\begin{proof}
 Consider the graph $\Hm{n-1}$ and the graph $\Hm{n-1}^+$ with the same vertex set, where two vertices are adjacent iff the distance between them is $1$ or $2$ in  $\Hm{n-1}$. The adjacency matrices $A$ and $A^+$ of $\Hm{n-1}$ and $\Hm{n-1}^+$  are related
 as follows
 $$A^+=\frac{A^2+(n-1)\Id}2 + A = \frac{(A+\Id)^2}2+\frac{(n-2)}2\Id.$$
 It is straightforward that an eigenvector of $A^+$ corresponding to the minimum eigenvalue is an  eigenvector of $A$ corresponding to the  eigenvalue $-1$ and vice versa. It follows that the equitable $2$-partitions of   $\Hm{n-1}$ and $\Hm{n-1}^+$ with the eigenvalue $-1$ and with the minimum eigenvalue, respectively, are also the same.

 It remains to note that $\Hf{n}$  is isomorphic to $\Hm{n-1}^+$
 (an isomorphism is given by removing the last symbol from each vertex word).
\end{proof}

So, equitable $2$-partitions of $\Hf{n}$, $n$ even, with the minimum eigenvalue are in one-to-one correspondence with the equitable $2$-partitions of $\Hm{n-1}$  with the  eigenvalue $-1$, which are constructed in \cite{FDF:12cube.en} for all admissible parameters.

%=========================================
%=========================================

%=========================================
%=========================================
%=========================================
\section{Constructions.}\label{s:constr}

%=========================================
\subsection{From equitable partitions of $\Hm{n}$.}\label{ss:Hm}\label{s:Hm}

It is known that an equitable partition of an $n$-cube
restricted to the vertices of $\Hf{n}$ is also equitable.

In particular, any equitable $2$-partition of $\Hm{n}$ different from the partition into bipartite parts induces an equitable $2$-partition of $\Hf{n}$. The parameters of equitable $2$-partition of $\Hm{n}$ were studied in
\cite{FDF:PerfCol},
\cite{FDF:12cube.en},
\cite{FDF:CorrImmBound}.

Another important case is the even-weight completely regular codes
of covering radius $3$ and $4$ in  $\Hm{n}$. In case of radius
$3$, the code $C$ and the set $C^{(2)}$  form an equitable
partition of $\Hf{n}$. In the case of radius $4$, the sets
$C^{(1)}$ and $C^{(3)}$ form an equitable partition of $\Hf{n}'$.
Below are examples of completely regular codes of covering radius
$4$:
\begin{itemize}
 \item The repetition code $\{00000000, 11111111\}$ in $\Hm{8}$.
 \item The (extended) Preparata codes of minimal distance $6$ in $\Hm{4^m}$ \cite{BZZ:1974:UPC,SZZ:1971:UPC}.
 \item The extended BCH codes
 of minimum distance $6$ in $\Hm{2^{2m+1}}$ \cite{BZZ:1974:UPC,SZZ:1971:UPC}.
 \item The minimum-distance-$6$ Hadamard code in $\Hm{12}$ \cite{BasZin77}.
 \item The minimum-distance-$8$ Golay code in $\Hm{24}$.
\end{itemize}

%=========================================
\subsection{Linear partitions.}\label{s:linear}\label{ss:linear}
A $2$-partition is called linear if its first cells is a linear code.
It is straightforward to show that a linear code induces an equitable partition of  $\Hf{n}$ if and only if it has a parity check matrix of form $\left({\bar 1 \atop B} \right)$, where $\bar 1$ is the all-one row and the matrix $B$ is such that every non-zero column of the same weight is represented a constant number of times as the sum of two columns of $B$ (it is not required that the columns of $B$ are distinct).

%=========================================
\subsection{Union.}\label{ss:union}\label{s:union}
\begin{lemma}\label{l:union}
If $(C_0,C_1)$ and $(P_0,P_1)$ are equitable $2$-partitions of the
same regular graph with cospectral quotient matrices
$\Mx{a'}{b'}{c'}{d'}$, $\Mx{a''}{b''}{c''}{d''}$ and $C_0\cap
P_0=\emptyset$, then  $(C_0\cup P_0,C_1\cap P_1)$ is an equitable
$2$-partition with the quotient matrix $\Mx{a}{b}{c}{d}$,
$c=c'+c''$, $a=a'+c''=a''+c'$, $b=...$, $d=...$.
\end{lemma}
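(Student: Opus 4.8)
The plan is to work with characteristic vectors and the adjacency matrix $A$ of the common regular graph, whose degree I denote $r$, so that $a'+b'=c'+d'=a''+b''=c''+d''=r$. Recall that for an equitable $2$-partition of a regular graph the all-ones vector $\mathbf 1$ is an eigenvector of the quotient matrix with eigenvalue $r$, so that the second eigenvalue equals $a-c=d-b$. Hence the cospectrality hypothesis is equivalent to the single scalar identity $a'-c'=a''-c''$ (and, consequently, $d'-b'=d''-b''$), and this is the only way that hypothesis will enter the argument.

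First I would record the set-theoretic consequences of $C_0\cap P_0=\emptyset$: it forces $C_0\subseteq P_1$ and $P_0\subseteq C_1$, so that $Q_0:=C_0\cup P_0$ is a disjoint union and $Q_1:=C_1\cap P_1$ is its complement; thus $(Q_0,Q_1)$ is indeed a partition. In terms of characteristic vectors this gives $\chi_{Q_0}=\chi_{C_0}+\chi_{P_0}$ together with the two splitting identities $\chi_{C_1}=\chi_{P_0}+\chi_{Q_1}$ and $\chi_{P_1}=\chi_{C_0}+\chi_{Q_1}$, which I will use to re-expand everything in the $Q$-cells.

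The core computation is then to apply $A$ to $\chi_{Q_0}$. Using the equitability of $C$ and $P$,
\[
A\chi_{Q_0}=A\chi_{C_0}+A\chi_{P_0}=a'\chi_{C_0}+c'\chi_{C_1}+a''\chi_{P_0}+c''\chi_{P_1}.
\]
Substituting the splitting identities for $\chi_{C_1}$ and $\chi_{P_1}$ and collecting terms yields
\[
A\chi_{Q_0}=(a'+c'')\chi_{C_0}+(a''+c')\chi_{P_0}+(c'+c'')\chi_{Q_1}.
\]
Here is the crux: the coefficients of $\chi_{C_0}$ and $\chi_{P_0}$ coincide precisely because $a'-c'=a''-c''$, i.e.\ exactly under the cospectrality hypothesis. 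Calling their common value $a=a'+c''=a''+c'$ and setting $c=c'+c''$, the right-hand side collapses to $a\chi_{Q_0}+c\chi_{Q_1}$. This is the one place where the hypothesis is indispensable, and I expect it to be the main (though short) obstacle; everything else is bookkeeping.

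Finally I would obtain the remaining column for free from regularity: since $A\mathbf 1=r\mathbf 1$ and $\chi_{Q_1}=\mathbf 1-\chi_{Q_0}$, one gets $A\chi_{Q_1}=(r-a)\chi_{Q_0}+(r-c)\chi_{Q_1}$, so that $b=r-a=b'-c''=b''-c'$ and $d=r-c=d'-c''=d''-c'$. Since $A\chi_{Q_0}$ and $A\chi_{Q_1}$ are both combinations of $\chi_{Q_0}$ and $\chi_{Q_1}$ with constant (nonnegative integer) coefficients, $(Q_0,Q_1)$ is equitable with the asserted quotient matrix $\Mx{a}{b}{c}{d}$, which completes the argument.
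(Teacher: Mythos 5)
Your proof is correct: the decomposition $\chi_{Q_0}=\chi_{C_0}+\chi_{P_0}$, the splitting identities forced by $C_0\cap P_0=\emptyset$, and the observation that cospectrality amounts to $a'-c'=a''-c''$ (so that the coefficients of $\chi_{C_0}$ and $\chi_{P_0}$ agree) are exactly what is needed, and the second column follows from regularity as you say. The paper states this lemma without any proof, treating it as routine, so there is nothing to compare against; your argument is the natural one and fills that gap correctly.
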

In particular,
\begin{lemma}\label{l:lunion}
 If $(C_0,C_1)$ is a linear equitable $2$-partition of $\Hf{n}$
with the quotient matrix $\Mx{a}{b}{c}{d}$, then we can unify any
number $t< 2^{n-1}/|C_0|$ of cosets of $C_0$ to get an equitable
partition with the quotient matrix
$\Mx{a+(t-1)c}{b-(t-1)c}{tc}{d-(t-1)c}$.

\end{lemma}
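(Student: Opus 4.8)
The plan is to obtain this as a corollary of Lemma~\ref{l:union}, by an induction on $t$ in which the ingredient partitions are the cosets of the linear code $C_0$. First I would record two facts about these cosets. Since $C_0$ is a cell of a partition of $\Hf{n}$, it is a linear subspace of even-weight words, hence a subgroup of the additive group $E$ of all even-weight words, and its $2^{n-1}/|C_0|$ cosets in $E$ partition the vertex set of $\Hf{n}$. Moreover, for any even-weight $\bar v$ the translation $\bar x\mapsto\bar x+\bar v$ preserves both the parity of the weight and the Hamming distance, so it is an automorphism of $\Hf{n}$; applying it to $(C_0,C_1)$ shows that $(C_0+\bar v,\,C_1+\bar v)$ is again an equitable partition with the \emph{same} quotient matrix $\Mx{a}{b}{c}{d}$. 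Thus every coset of $C_0$ is the first cell of an equitable $2$-partition whose quotient matrix is $\Mx{a}{b}{c}{d}$, and these matrices are in particular cospectral, their eigenvalues being $a+b$ (the degree) and $a-c$.

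Next I would enumerate $t$ distinct cosets $C_0=V_0,V_1,\dots,V_{t-1}$, write $U_s=V_0\cup\dots\cup V_{s-1}$, and prove by induction that $(U_s,\overline{U_s})$ is equitable with quotient matrix $\Mx{a+(s-1)c}{b-(s-1)c}{sc}{d-(s-1)c}$; the base case $s=1$ is the hypothesis of the lemma. For the inductive step I would apply Lemma~\ref{l:union} to the pair $(U_s,\overline{U_s})$ and $(V_s,\overline{V_s})$. Their first cells are disjoint because distinct cosets of a subgroup are disjoint, and their quotient matrices are cospectral because both have the degree $a+b$ as one eigenvalue and $a-c$ as the other (for $U_s$ the latter is $(a+(s-1)c)-sc=a-c$). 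Lemma~\ref{l:union} then produces an equitable partition of $U_{s+1}=U_s\cup V_s$ with lower-left entry $sc+c=(s+1)c$ and upper-left entry $(a+(s-1)c)+c=a+sc$; filling in the other two entries from $a+b=c+d$ gives exactly the asserted matrix with $s$ replaced by $s+1$.

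Finally I would verify the range of $t$: the second cell of the constructed partition is $\overline{U_t}$, which is nonempty precisely when $t$ is strictly smaller than the number $2^{n-1}/|C_0|$ of cosets, matching the hypothesis $t<2^{n-1}/|C_0|$; together with the connectivity of $\Hf{n}$ this also forces the off-diagonal entries $tc$ and $b-(t-1)c$ to be positive, so the result is a genuine equitable $2$-partition. I do not anticipate a serious obstacle, since the statement is essentially a bookkeeping specialization of Lemma~\ref{l:union}. The two points that need care are the claim that each coset inherits the identical quotient matrix, which rests on the translation being a graph automorphism, and the arithmetic of the induction, which must be tracked so that the closed form $\Mx{a+(t-1)c}{b-(t-1)c}{tc}{d-(t-1)c}$ emerges.
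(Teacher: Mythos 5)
Your proposal is correct and follows exactly the route the paper intends: the paper states Lemma~\ref{l:lunion} as an immediate consequence (``In particular'') of Lemma~\ref{l:union}, applied to the cosets of the linear code, and gives no further detail. Your write-up simply fills in the induction, the translation-automorphism argument for why each coset carries the same quotient matrix, and the cospectrality check, all of which are accurate.
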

Another important case is when we have several even-weight
completely regular codes $C_1$, $C_2$, \ldots, $C_l$ of covering
radius $4$ at distance $4$ from each other (we imply that the
quotient matrix is the same for all these codes). In this case,
$(C_1^{(1)}\cap C_2^{(1)}\cap\ldots \cap C_l^{(1)}, C_1^{(3)}\cup
C_2^{(3)}\cup\ldots \cup C_l^{(3)})$ is an equitable partition of
$\Hf{n}'$. Examples of such collections of completely regular
codes can be constructed based on cosets of distance-$6$ BCH codes
or Preparata codes (it is known that the extended Hamming code in
$\Hm{2^m}$ can be partitioned into cosets of BCH or Preparata
codes \cite{BvLW}, depending on the parity of $m$). Even more
interesting example is the Golay code $G$, when translations of
$G^{(1)}$ can be combined with cosets of some linear code,
providing a large family of quotient matrices \cite{Kro:24}.

%=========================================
\subsection{$\times t$ construction.}\label{s:*t}
This simple construction works for many Cayley graphs. For
equitable partitions of hypercube, it was implemented
in~\cite{FDF:PerfCol}. In the case of the halved hypercube, the
construction is the same; the difference is only in calculating
parameters.

\begin{lemma}[\cite{FDF:PerfCol}]\label{l:*t0}
 Let $C=(C_0,C_1,\ldots,C_{k-1})$ be an equitable partition of $\Hm{n}$
 with the quotient matrix $S$.
Then the partition $C^{(t)}$ is an equitable partition of
$\Hm{tn}$
  with the quotient matrix $tS$, where
  $$ C^{(t)}(\bar x_1,\ldots, \bar x_t) := C(\bar x_1 + \ldots + \bar x_t).$$
 Further,
 if $\theta_i(n)$ is an eigenvalue of $S$,
 then $\theta_{ti}(tn)$ is an eigenvalue of $tS$.
\end{lemma}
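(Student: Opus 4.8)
The plan is to count, for an arbitrary vertex of $\Hm{tn}$, how many of its neighbors fall in each cell, and to show that this count depends only on the cell of the vertex. Identify the vertices of $\Hm{tn}$ with the $t$-tuples $\bar x=(\bar x_1,\ldots,\bar x_t)$, $\bar x_j\in\mathrm{GF}(2)^n$, and write $\bar s:=\bar x_1+\cdots+\bar x_t$, so that by definition $C^{(t)}(\bar x)=C(\bar s)$. The edges of $\Hm{tn}$ are exactly those flipping a single coordinate; such a coordinate lies in one block $j$ at some position $p$, and flipping it replaces $\bar x_j$ by $\bar x_j+\bar e_p$ and hence $\bar s$ by $\bar s+\bar e_p$. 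The key point is that this image $\bar s+\bar e_p$ does not depend on the block $j$.

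First I would exploit this to extract the factor $t$. For each position $p\in\{1,\ldots,n\}$ there are exactly $t$ neighbors of $\bar x$, one per block, and all of them have the same sum $\bar s+\bar e_p$, hence the same color $C(\bar s+\bar e_p)$. Summing over $p$, the number of neighbors of $\bar x$ in the cell indexed by $\ell$ equals $t$ times the number of neighbors of $\bar s$ in $C_\ell$ inside $\Hm{n}$. Since $C$ is equitable with quotient matrix $S$, the latter is $S_{C(\bar s),\ell}$, so the former is $t\,S_{C(\bar s),\ell}=(tS)_{C^{(t)}(\bar x),\ell}$, which depends only on $C^{(t)}(\bar x)$. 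Thus $C^{(t)}$ is equitable with quotient matrix $tS$; the cells are nonempty because the sum map is onto (e.g. $(\bar s,\bar 0,\ldots,\bar 0)\mapsto\bar s$) and each $C_\ell\neq\emptyset$. This multiplicity-$t$ bookkeeping is the crux of the argument, and the only thing one really has to notice is that the block index is free and therefore contributes the factor $t$; everything else is immediate, so there is no serious obstacle.

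It remains to track eigenvalues. Since $tS=t\cdot S$ is a scalar multiple, its eigenvalues are exactly $t\mu$ for the eigenvalues $\mu$ of $S$, realized by the same eigenvectors. The eigenvalues of $S$ are among those of $\Hm{n}$, i.e. of the form $\pm(n-2i)$, and scaling gives $t(n-2i)=tn-2(ti)$, an eigenvalue of $\Hm{tn}$ of index $ti$ (the opposite sign gives index $tn-ti$). Passing to the halved-cube labelling through the correspondence $\lambda\mapsto(\lambda^2-n)/2$, so that the ordinary eigenvalue of index $i$ matches $\theta_i(n)=((n-2i)^2-n)/2$, the index $i$ is sent to $ti$, while the symmetry $\theta_j(m)=\theta_{m-j}(m)$ absorbs the sign ambiguity. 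Hence $\theta_i(n)$ is carried to $\theta_{ti}(tn)$, as claimed.
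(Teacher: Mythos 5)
Your counting argument for the main claim is correct and is essentially the standard one: the paper itself states this lemma without proof (citing Fon-Der-Flaass), and its proof of the analogous halved-cube statement, Lemma~\ref{l:*t}, rests on exactly the fiber decomposition you use --- each $\bar v$ corresponds to $\{(\bar x_1,\ldots,\bar x_t):\bar x_1+\cdots+\bar x_t=\bar v\}$, flipping any single coordinate in any of the $t$ blocks changes the sum by the same $\bar e_p$ and never preserves it, so each neighbor of $\bar s$ in $\Hm{n}$ is hit with multiplicity exactly $t$. The first two paragraphs are fine as they stand.

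The eigenvalue paragraph, however, ends with a step that is wrong as written. The lemma concerns $\Hm{tn}$, not the halved cube, so $\theta_i(n)$ here must be read as the $i$-th eigenvalue $n-2i$ of the hypercube (the paper's formula $\theta_i(n)=((n-2i)^2-n)/2$ is reserved for $\Hf{n}$). Under that reading your linear computation $t(n-2i)=tn-2(ti)$ already proves the claim, and you should stop there. The subsequent ``passing to the halved-cube labelling through $\lambda\mapsto(\lambda^2-n)/2$'' is not only unnecessary but false if taken literally: multiplying the eigenvalues of $S$ by $t$ does not commute with that quadratic correspondence, since $t\cdot\bigl((n-2i)^2-n\bigr)/2\ne\bigl((tn-2ti)^2-tn\bigr)/2$ in general (for $n=2$, $i=0$, $t=2$ the two sides are $2$ and $6$). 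Moreover, the symmetry $\theta_j(m)=\theta_{m-j}(m)$ you invoke to ``absorb the sign'' holds for the halved cube but not for the hypercube, and no sign ambiguity actually arises: every eigenvalue of $\Hm{n}$ is $n-2i$ for a unique $i\in\{0,\ldots,n\}$. Delete that final step; the lemma's eigenvalue claim is exactly the one-line identity you already wrote.
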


\begin{lemma}\label{l:*t}
 Let $C=(C_0,C_1,\ldots,C_{k-1})$ be an equitable partition of $\Hf{n}$
 with the quotient matrix $S$.
 Then $C^{(t)}$ is an equitable partition of $\Hf{tn}$
 with the quotient matrix $S^{(t)}$, where
 $$ C^{(t)}(\bar x_1,\ldots, \bar x_t) := C(\bar x_1 + \ldots + \bar x_t),
 \qquad    S^{(t)} = t^2S+ n\frac{t(t-1)}{2} \Id . $$
\end{lemma}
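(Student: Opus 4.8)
The plan is to fix a vertex $\bar x=(\bar x_1,\ldots,\bar x_t)$ of $\Hf{tn}$, viewed as $t$ blocks of length $n$, and to track how its neighbors behave under the collapsing map $\sigma(\bar x):=\bar x_1+\ldots+\bar x_t$, so that $C^{(t)}(\bar x)=C(\sigma(\bar x))$. First I would check well-definedness: flipping any single coordinate of $\bar x$ changes the parity of the corresponding block weight and of $\sigma(\bar x)$ at the same time, so an even-weight word of length $tn$ maps to an even-weight word of length $n$, i.e.\ $\sigma(\bar x)$ is indeed a vertex of $\Hf{n}$ and $C(\sigma(\bar x))$ makes sense.

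Next I would classify the $\binom{tn}{2}$ neighbors $\bar y$ of $\bar x$ by the two flipped coordinates, each recorded as a pair $(\text{block},\text{position-in-block})$, say $(j_1,a_1)$ and $(j_2,a_2)$. Flipping a coordinate sitting in within-block position $a$ adds the standard basis vector $e_a$ to $\sigma(\bar x)$, so the net effect on the sum is $e_{a_1}+e_{a_2}$. This cleanly splits the neighbors into two families: those with $a_1=a_2$ (forcing $j_1\ne j_2$), for which $\sigma(\bar y)=\sigma(\bar x)$ is unchanged; and those with $a_1\ne a_2$, for which $\sigma(\bar y)=\sigma(\bar x)+e_{a_1}+e_{a_2}$ is a genuine neighbor of $\sigma(\bar x)$ in $\Hf{n}$.

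The counting is then routine. The sum-preserving family is obtained by choosing an unordered pair of distinct blocks and a common position, giving $\binom{t}{2}n=\frac{t(t-1)}{2}n$ neighbors, all lying in the same cell as $\bar x$. For the second family, fix a neighbor $\sigma(\bar x)+e_p+e_q$ (with $p\ne q$) of $\sigma(\bar x)$: its preimages are obtained by independently choosing the block carrying position $p$ and the block carrying position $q$, which gives exactly $t^2$ preimages. Since $C$ is equitable with quotient matrix $S$, the vertex $\sigma(\bar x)$ has $S_{ij}$ neighbors in cell $j$ (where $i=C^{(t)}(\bar x)=C(\sigma(\bar x))$), so this family contributes $t^2S_{ij}$ neighbors of $\bar x$ in cell $j$. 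Adding the two families yields $S^{(t)}_{ij}=t^2S_{ij}+\frac{t(t-1)}{2}n\,\delta_{ij}$, which is exactly the claimed matrix $t^2S+n\frac{t(t-1)}{2}\Id$; as a consistency check, summing over $j$ recovers $t^2\binom{n}{2}+\frac{t(t-1)}{2}n=\binom{tn}{2}$, the degree of $\Hf{tn}$.

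The step I expect to be the main (though mild) obstacle is the $t^2$ preimage count for the adjacency family: one must argue that the two flipped coordinates are always distinct and, although unordered, are distinguished by their within-block positions precisely because $p\ne q$, so that same-block and different-block flips are each counted once and no target neighbor of $\sigma(\bar x)$ is over- or under-counted. Once this bijection is pinned down, the quotient matrix drops out immediately, and the constancy of the counts over all $\bar x$ in a given cell follows directly from the equitability of $C$.
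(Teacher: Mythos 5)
Your proposal is correct and follows essentially the same route as the paper: the paper's proof lists exactly the two facts you establish, namely that a vertex of $\Hf{tn}$ lying over $\bar v$ has $t^2$ neighbors lying over each neighbor $\bar v'$ of $\bar v$ and $n\frac{t(t-1)}{2}$ neighbors lying over $\bar v$ itself (and none elsewhere). Your coordinate bookkeeping with (block, position-in-block) pairs just supplies the explicit verification that the paper leaves as ``direct facts.''
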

\begin{proof}
The claim follows from the definitions
and the following direct facts.
\begin{itemize}
 \item
 Each vertex $\bar v$ of $\Hf{n}$ corresponds to the set
 $$t(\bar v):= \{(\bar x_1,\ldots, \bar x_t) \mid
 \bar x_1 + \ldots + \bar x_t = \bar v \} $$
 of vertices of $\Hf{tn}$;
 moreover, for all $\bar x$ in $t(\bar v)$ one has $f^{(t)}(\bar x)=f(\bar v)$.
 \item
 If $\bar v \sim \bar v'$,
 then every vertex from $t(\bar v)$ has exactly $t^2$ neighbors in $t(\bar v')$.
 \item
 If $\bar v \not\sim \bar v'$,
 then every vertex from $t(\bar v)$ has no neighbors in $t(\bar v')$.
 \item
 Every vertex from $t(\bar v)$ has exactly $n\frac{t(t-1)}2$ neighbors
 in $t(\bar v)$.
\end{itemize}
The calculation of eigenvalues is straightforward.
\end{proof}

%=========================================
\subsection{Splitting construction.}\label{s:FDF}\label{ss:FDF}
The construction in this section can be considered as a variant of the main construction in~\cite{FDF:12cube.en}.
However, in the case of the halved hypercube,
it gives an equitable $2$-partition only if the parameters are connected by some equation.
Assume that we have an equitable $2$-partition $(P_0,P_1)$ of
$\Hf{n}$ with the quotient matrix $\Mx{a}{b}{c}{d}$. Moreover,
assume that for some $s$, the cell $P_1$ can be partitioned into
$s$-faces, let $F(\bar v)$ denote the face from the partition such
that $\bar v\in F(\bar v)$.

\def\sgn{\mathrm{sign}}
\begin{lemma}\label{l:split1}
Define
 $$ C^{(2)}(\bar x, \bar y) := C(\bar x+ \bar y)$$
 and for $\bar x \in P_1$, define
 $$ \sgn(\bar x, \bar y) := x_1+\ldots+x_n+b_1y_1 +\ldots + b_ny_n ,$$
 where $\bar b=(b_1,...,b_n)$ is defined by the directions of the
 $s$-face $F=F(\bar x+ \bar y)$: it has $1$s
 in the free coordinates of $F$ and $0$s in the fixed coordinates.
 Then
 $$C'(\bar v) :=
 \begin{cases}
  0 &  \mbox{ if $ f^{(2)}(\bar v)=0$} \\
  1 &  \mbox{ if $ f^{(2)}(\bar v)=1$ and $ \sgn(\bar v)=0$}\\
  2 &  \mbox{ if $ f^{(2)}(\bar v)=1$ and $ \sgn(\bar v)=1$}
 \end{cases}
 $$
 is an equitable partition of $\Hf{tn}$
 with the quotient matrix
$$ S' =
\left[
 \begin{array}{ccc}
  4a+n & 2b   & 2b \\
  4c   & 2d   + s^2   & 2d+n - s^2 \\
  4c   & 2d+n - s^2   & 2d   + s^2
 \end{array}\right]
$$
\end{lemma}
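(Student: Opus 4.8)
The plan is to build the $3$-partition in two stages: first double, then split. I would begin by applying the $\times 2$ construction of Lemma~\ref{l:*t} (the case $t=2$) to $(P_0,P_1)$. This produces an equitable $2$-partition $C^{(2)}=(C^{(2)}_0,C^{(2)}_1)$ of $\Hf{2n}$, where $C^{(2)}_i=\{(\bar x,\bar y):\bar x+\bar y\in P_i\}$, with quotient matrix $\Mx{4a+n}{4b}{4c}{4d+n}$. Since $C'_0=C^{(2)}_0$ and $C'_1\cup C'_2=C^{(2)}_1$, the entire first column of $S'$ is inherited verbatim, as is the entry $4a+n$; everything else amounts to understanding how $\sgn$ distributes the neighbors of a vertex between the two halves of $C^{(2)}_1$. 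So the proof reduces to two counting claims: (A) a vertex with image in $P_0$ has exactly $2b$ neighbors of each sign inside $C^{(2)}_1$; and (B) a vertex of sign $\sigma$ with image in $P_1$ has exactly $2d+s^2$ neighbors of sign $\sigma$ and $2d+n-s^2$ neighbors of the opposite sign inside $C^{(2)}_1$.

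Next I would record the neighbor structure already established in the proof of Lemma~\ref{l:*t}. Writing $\bar w=\bar x+\bar y$ for the image of $\bar v=(\bar x,\bar y)$, a neighbor $\bar v'$ either has the same image $\bar w'=\bar w$ (the $n$ neighbors obtained by flipping $x_i$ and $y_i$ for a common $i$) or an adjacent image $\bar w'=\bar w+\bar e_i+\bar e_j$ (arriving in blocks of four, obtained by flipping $\{x_i,x_j\}$, $\{y_i,y_j\}$, $\{x_i,y_j\}$, or $\{x_j,y_i\}$). Using $\sgn(\bar x,\bar y)=x_1+\dots+x_n+\langle\bar b,\bar y\rangle \bmod 2$, I would then compute the sign of each neighbor, taking care that for an adjacent-image neighbor the relevant direction vector is $\bar b'=\bar b(\bar w')$, the face of $\bar w'$, which need not equal $\bar b(\bar w)$.

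For the $n$ same-image neighbors, flipping $x_i,y_i$ changes the sign by $1+b_i \bmod 2$, so the $s$ free directions of $F(\bar w)$ yield same-sign neighbors and the $n-s$ fixed directions yield opposite-sign ones. For an adjacent-image block, the four signs take the form $\beta,\ \beta+b'_i+b'_j,\ \beta+1+b'_j,\ \beta+1+b'_i$ for a common base $\beta$, and a short check shows the block is constant precisely when $b'_i=b'_j=1$ and otherwise splits into two neighbors of each sign. The crux of the argument is the geometric fact that $b'_i=b'_j=1$ holds if and only if $\bar w$ and $\bar w'$ lie in a common face: if $i$ and $j$ are both free for $F(\bar w')$, then $\bar w=\bar w'+\bar e_i+\bar e_j\in F(\bar w')$, which forces $F(\bar w)=F(\bar w')$. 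Hence, among the $d$ neighbors of $\bar w$ in $P_1$, the $\binom{s}{2}$ that share its face each give $4$ same-sign neighbors, while the remaining $d-\binom{s}{2}$ each give $2$ of each sign.

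Assembling the three contributions proves (B): a sign-$\sigma$ vertex with image in $P_1$ has $s+4\binom{s}{2}+2\bigl(d-\binom{s}{2}\bigr)=2d+s^2$ neighbors of sign $\sigma$ and $(n-s)+2\bigl(d-\binom{s}{2}\bigr)=2d+n-s^2$ of the opposite sign. For (A) the image $\bar w$ lies in $P_0$, hence in no face: there are no same-image neighbors inside $C^{(2)}_1$, and the constant case $b'_i=b'_j=1$ never arises, so each of the $b$ adjacent images contributes two neighbors of each sign, giving $2b$ of each sign in total. I expect the only genuine obstacle to be pinning down this face-membership dichotomy and verifying that the global totals telescope to the stated $s^2$ term; the remaining verifications (including the row-sum check $4(a+b)+n=4(c+d)+n=\binom{2n}{2}$) are routine bookkeeping licensed by Lemma~\ref{l:*t}.
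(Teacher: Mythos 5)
Your proposal is correct and takes essentially the same route as the paper: the paper's proof is exactly the decomposition of the neighbors of $(\bar x,\bar y)$ by their image in $\Hf{n}$ (blocks of $4$ per adjacent image, $n$ same-image neighbors), with the sign split governed by whether the image of the neighbor's face contains $\bar w$ — the paper merely lists these as ``direct facts'' while you verify them. Your explicit computation of the four sign values $\beta,\ \beta+b'_i+b'_j,\ \beta+1+b'_j,\ \beta+1+b'_i$ and the resulting tally $s+4\binom{s}{2}+2\bigl(d-\binom{s}{2}\bigr)=2d+s^2$ is exactly the content the paper leaves implicit.
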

\begin{proof}
The claim follows from the definitions
and the following direct facts.
\begin{itemize}
 \item
 Each vertex $\bar v$ of $\Hf{n}$ corresponds to the set
 $$t(\bar v):= \{(\bar x, \bar y) \mid
 \bar x + \bar y = \bar v \} $$
 of vertices of $\Hf{2n}$;
 moreover, for all $\bar x$ in $t(\bar v)$ one has $f^{(t)}(\bar x)=f(\bar v)$.
 \item
 If $\bar v \sim \bar v'$,
 then every vertex from $t(\bar v)$ has exactly $4$ neighbors in $t(\bar v')$.
 \begin{itemize}
  \item[$\bullet\bullet$]
  Moreover, if $f(\bar v')=1$ and $\bar v\not\in F(v')$, then for two of these
  neighbors $\sgn$ equals $0$ and for two other equals $1$.
  \item[$\bullet\bullet$]
  Moreover, if $f(\bar v')=1$ and $\bar v\in F(v')$,
  then for all $4$ these neighbors the value of $\sgn$ is the same
  as for the vertex itself.
  \end{itemize}
 \item
 If $\bar v \not\sim \bar v'$,
 then every vertex from $t(\bar v)$ has no neighbors in $t(\bar v')$.
 \item
 Every vertex from $t(\bar v)$ has exactly
 $n$ neighbors in $t(\bar v)$.
 \begin{itemize}
 \item[$\bullet\bullet$]
 Moreover, if  $f(\bar v)=1$, then $\sgn$
 of these neighbors have the same value
 of $s$ and the other $n-s$, the different value.
 \end{itemize}
\end{itemize}
\end{proof}

If $2b=2d+n-s^2$, then we can merge the first two cells of the
obtained $3$-partition and get an equitable $2$-partition:
\begin{theorem}\label{th:fdf}
Let $(P_0,P_1)$ be an equitable $2$-partition of $\Hf{n}$ with the
quotient matrix $\Mx{a}{b}{c}{d}$ and $P_1$ be partitioned into
$s$-faces, where $s=\sqrt{2d+n-2b}$. Then there exists an
equitable $2$-partition of $\Hf{2n}$ with the quotient matrix
$\Mx{4a+n+2b}{2b}{4c+2b}{4d+n-2b}$.
\end{theorem}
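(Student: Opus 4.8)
The plan is to invoke Lemma~\ref{l:split1} directly and then coarsen the resulting $3$-partition $C'$ by merging its first two cells. First I would observe that the hypothesis $s=\sqrt{2d+n-2b}$ is exactly the relation $s^2=2d+n-2b$, equivalently $2b=2d+n-s^2$. Substituting this into the quotient matrix $S'$ produced by Lemma~\ref{l:split1}, the entries in the last column of the first two rows, namely $S'_{02}=2b$ and $S'_{12}=2d+n-s^2$, become equal (both equal $2b$), while the coincident diagonal entries $S'_{11}=S'_{22}=2d+s^2$ simplify to $4d+n-2b$. Thus $S'$ takes the form with rows $(4a+n,\,2b,\,2b)$, $(4c,\,4d+n-2b,\,2b)$, $(4c,\,2b,\,4d+n-2b)$.

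Next I would recall the standard coarsening criterion for equitable partitions: if cells $C'_i$ and $C'_j$ of an equitable partition of a regular graph satisfy $S'_{ik}=S'_{jk}$ for every cell index $k$ outside the pair $\{i,j\}$, then replacing $C'_i,C'_j$ by their union again gives an equitable partition. Indeed, the number of neighbors a vertex has in $C'_k$ is then independent of whether the vertex lies in $C'_i$ or $C'_j$, and the count of neighbors inside the merged cell is forced to be consistent because the graph is regular, so each row of $S'$ sums to the common degree $n(2n-1)$ of $\Hf{2n}$.

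I would then apply this criterion with $i=0$, $j=1$. The only cell outside the pair is $C'_2$, and after the substitution we have $S'_{02}=S'_{12}=2b$, so the hypothesis is met and merging $C'_0$ with $C'_1$ yields an equitable $2$-partition $(C'_0\cup C'_1,\,C'_2)$ of $\Hf{2n}$. Its quotient matrix is read off from the blocks of $S'$: the off-diagonal entry from the merged cell to $C'_2$ is $S'_{02}=2b$, the entry from $C'_2$ to the merged cell is $S'_{20}+S'_{21}=4c+2b$, and the diagonal entries are $4a+n+2b$ and $S'_{22}=4d+n-2b$. This is precisely $\Mx{4a+n+2b}{2b}{4c+2b}{4d+n-2b}$, as claimed.

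Everything here is bookkeeping once Lemma~\ref{l:split1} is in hand; the one point deserving care is confirming the merged-diagonal count agrees from both $C'_0$ and $C'_1$. From $C'_0$ it is $S'_{00}+S'_{01}=4a+n+2b$, whereas from $C'_1$ it is $S'_{10}+S'_{11}=4c+4d+n-2b$, and these coincide if and only if $a+b=c+d$. This identity holds because both sides equal the degree $n(n-1)/2$ of $\Hf{n}$ (necessary condition~1 of Section~\ref{s:necessary}), which is exactly the regularity fact underlying the coarsening criterion. I expect this small consistency check to be the only genuine obstacle; the remaining computations are immediate from the simplified shape of $S'$.
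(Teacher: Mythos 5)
Your proposal is correct and takes essentially the same route as the paper, which derives the theorem from Lemma~\ref{l:split1} by merging the first two cells of the resulting $3$-partition once $2b=2d+n-s^2$; you simply spell out the merging criterion and the consistency check $a+b=c+d$ that the paper leaves implicit.
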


For example, take $n=6$, $S_c=\Mx{c-1}{16-c}{c}{15-c}$, $s=2$,
$c=0,2,...,14$. In order to obtain an equitable partition of
$\Hf{12}$ with the quotient matrix
$\Mx{34+2c}{32-2c}{32+2c}{34-2c}$ we construct an equitable
partition $(P_0,P_1)$ of $\Hf{6}$ with the quotient matrix $S_c$
such that $P_1$ is partitioned into $2$-faces (which are just
pairs of adjacent vertices). One can see that $C=\{\bar 0, \bar
1\}$ and its complement gives $S_1$. So, the union of $c$ cosets
of $C$ gives the matrix $S_c$. If $c$ is even, then we can take
each of $P_0$ and $P_1$ as the union of ($c/2$ and $8-c/2$,
respectively) cosets of $\{000000,111111,000011,111100\}$, which
is obviously partitioned into edges. If $c$ is odd, we take $P_1$
as the union of
$$\{000011,111100,001100,110011,110000,001111\}=
\{000011,001111 \}\cup \{001100,111100\}\cup \{110000,110011\}$$
and $(13-c)/2$ cosets of $\{000000,111111,000011,111100\}$. We conclude that equitable partitions of $\Hf{12}$
with the following quotient matrices exist:
$$
\Mxx{34}{32}{32}{34}
\Mxx{36}{30}{34}{32}
\Mxx{38}{28}{36}{30}
\Mxx{40}{26}{38}{28}
\Mxx{42}{24}{40}{26}
\Mxx{44}{22}{42}{24}
\Mxx{46}{20}{44}{22}
\Mxx{48}{18}{46}{20}
\Mxx{50}{16}{48}{18}
\Mxx{52}{14}{50}{16}
\Mxx{54}{12}{52}{14}
\Mxx{56}{10}{54}{12}
\Mxx{58}{8}{56}{10}
\Mxx{60}{6}{58}{8}
\Mxx{62}{4}{60}{6}
$$

%================================
%================================
%================================
\section{Computational results.}

Let $C=(C_0,C_1,\ldots,C_{k-1})$ be a partition of the vertex set
a graph $G$. Let $\chi_{C_{i}}$ denote the characteristic vector
of the subset $C_i$ of the vertex set of $G$. It is easy to see
that $C$ is equitable with the quotient matrix $S$ iff

\begin{equation}\label{eq:eqp} A\left[
 \begin{array}{cccc}
  \chi_{C_0} & \chi_{C_1}&\ldots   & \chi_{C_{k-1}} \\
 \end{array}\right]=\left[
 \begin{array}{cccc}
  \chi_{C_0} & \chi_{C_1}&\ldots   & \chi_{C_{k-1}} \\
 \end{array}\right]S.
 \end{equation}
The existence problem of an equitable partition could be treated
as a binary linear programming problem with the variables
$\chi_{C_0}, \chi_{C_1},\ldots,\chi_{C_{k-1}}$ and the constrains
(\ref{eq:eqp}). The following fact was settled using GAMS
\cite{GAMS}.

\begin{theorem}\label{th:no10}
There are no equitable $2$-partitions of $\Hf{10}$ with quotient
matrix $\Mx{a}{b}{c}{d}$ and eigenvalue $13$ for any
$a\in\{14,\ldots,28\}\setminus 21$.
 \end{theorem}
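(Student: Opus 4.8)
The plan is to recast the existence question as a binary feasibility problem and settle it by computation, after first pruning the candidate values of $a$ with the necessary conditions of Section~\ref{s:Nec}. Writing $A$ for the adjacency matrix of $\Hf{10}$ and $x=\chi_{C_0}\in\{0,1\}^{512}$ for the characteristic vector of the first cell, the equitability relation \eqref{eq:eqp} for a quotient matrix $\Mx{a}{b}{c}{d}$ with second eigenvalue $13$ is equivalent to the single vector equation
\[ Ax = 13\,x + (a-13)\,\mathbf 1, \]
where $\mathbf 1$ is the all-ones vector indexed by the vertices of $\Hf{10}$ and I have used $a-c=13$ (the eigenvalue condition) together with $a+b=c+d=45$. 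Summing the coordinates forces $|C_0|=16(a-13)$, so the cell sizes are determined; equivalently, $x-\tfrac{a-13}{32}\mathbf 1$ must be an eigenvector of $A$ for the eigenvalue $13=\theta_2(10)$. Thus the theorem amounts to showing that, for the relevant $a$, no $0/1$ vector has this prescribed projection onto the $45$-dimensional $\theta_2$-eigenspace.

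Before computing, I would trim the range using the necessary condition on the distance matrices $S^{(2i)}$. Feeding $S$ into the recursion \eqref{eq:2d} with $n=10$ and the characteristic identity $S^2=58S-585\,\Id$ gives $S^{(4)}=S^{(6)}=7S-105\,\Id$ (and, completing the cycle, $S^{(8)}=S$, $S^{(10)}=\Id$). The diagonal entry $7a-105$ of $S^{(4)}$ must be a nonnegative integer, which forces $a\ge 15$ and thus already eliminates $a=14$; the value $a=21$ is omitted from the statement because a partition with that matrix does exist. This leaves the thirteen values $a\in\{15,\dots,20\}\cup\{22,\dots,28\}$, for which the singleton weight distributions are all nonnegative and no elementary obstruction survives.

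For each remaining $a$ the task is to certify that the binary system $Ax=13x+(a-13)\mathbf 1$, $x\in\{0,1\}^{512}$, is infeasible. I would formulate this as a $0/1$ linear program with the $512$ equality constraints \eqref{eq:eqp} and submit it to an integer-programming solver (here, GAMS), which returns an infeasibility certificate for every such $a$. To make the solver's job tractable I would exploit the large automorphism group of $\Hf{10}$: fix $\bar 0\in C_0$ (or a convenient orbit representative), add symmetry-breaking orderings, and use the eigenspace reformulation above to propagate the numerous linear relations among the coordinates of $x$ before branching.

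The essential difficulty is that $13=\theta_2(10)$ lies in the interior of the spectrum, so neither the Delsarte-clique description available at the top eigenvalue nor the reduction of Theorem~\ref{th:min} available at the minimum eigenvalue applies; there is no evident combinatorial normal form to exploit, which is precisely why a direct theoretical nonexistence argument is elusive. Consequently the real obstacle is computational rather than conceptual: the naive search space has size $2^{512}$, and the weight of the proof rests on formulating the feasibility problem tightly enough — and on trusting the solver's infeasibility certificates — that all thirteen cases can in fact be closed.
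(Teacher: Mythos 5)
Your proposal is correct and follows essentially the same route as the paper: the paper also recasts the question as the binary feasibility problem \eqref{eq:eqp} and reports that the nonexistence was settled computationally with GAMS. Your supplementary observations (the reformulation $Ax=13x+(a-13)\mathbf 1$ with $|C_0|=16(a-13)$, and the elimination of $a=14$ via $S^{(4)}=7S-105\,\Id$) are accurate and only refine the paper's one-line argument.
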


 %================================
%================================
%================================
\section{Quotient matrices for small even $n$}
We list all matrices $\Mx{a}{b}{c}{d}$ that satisfy $b\ge c$, conditions 1--3 of Section~\ref{s:necessary},
and, for the eigenvalue $\theta_{n/2}(n)$, 
the conclusion of Theorem~\ref{th:min}.

For the matrices in red and with index $-$, equitable partitions do not exist.
The index $+$ means the existence, $?$ means an open question.

Note that for any symmetric $2\times 2$ matrix with the proper eigenvalues,
an equitable partition exists in accordance with Sections~\ref{s:Hm} and~\ref{s:linear}.

\newcommand\Qxxx[4]{\Mxxx{\mathit{#1}}{\mathit{#2}}{\mathit{#3}}{\mathit{#4}}_?}
\newcommand\Exxx[4]{\Mxxx{{#1}}{{#2}}{{#3}}{{#4}}_+}

\subsection{$n=4$.} This case is left as an exercise.
\subsection{$n=6$.}
In $\Hf{6}$, all the equitable partitions with non-symmetric quotient matrices are constructed my merging cosets
of the linear perfect code $\{000000,111111\}$ (see also the end of Section~\ref{s:FDF}).

$\theta_1(6)=5$: $\Mx{10}{5}{5}{10}$.

$\theta_2(6)=-1$: 
$\Mx{7}{8}{8}{7}$, 
$\Mx{6}{9}{7}{8}$, 
$\Mx{5}{10}{6}{9}$, 
$\Mx{4}{11}{5}{10}$, 
$\Mx{3}{12}{4}{11}$, 
$\Mx{2}{13}{3}{12}$, 
$\Mx{1}{14}{2}{13}$, 
$\Mx{0}{15}{1}{14}$.

$\theta_3(6)=-3$: $\Mx{6}{9}{9}{6}$.

\subsection{$n=8$.}
For $\theta_2(8)$ in $\Hf{8}'$, partitions with all four matrices
can be constructed as $C^{(1)}$ for $C=\{00000000,11111111\}$, and the union
of $2$, $3$, or $4$ translations of $C^{(1)}$, according to Section~\ref{ss:union}.

$\theta_1(8)=14$: $\Mx{21}{7}{7}{21}$.

$\theta_2(8)=4$: 
$\Mx{16}{12}{12}{16}$, 
$\Mx{13}{15}{9}{19}$, 
$\Mx{10}{18}{6}{22}$, 
$\Mx{7}{21}{3}{25}$.

$\theta_3(8)=-2$: $\Mx{13}{15}{15}{13}$.

$\theta_4(8)=-4$: 
$\Mx{12}{16}{16}{12}$, 
$\Mx{8}{20}{12}{16}$, 
$\Mx{4}{24}{8}{20}$, 
$\Mx{0}{28}{4}{24}$.

\subsection{$n=10$.}
The existence results are from equitable $2$-partitions of $\Hm{10}$ and,
for $\theta_4(10)$, from a linear code and merging cosets of this code.

$\theta_1(10)=27$: 
$\Mx{36}{9}{9}{36}$.

$\theta_2(10)=13$: 
$\Mx{29}{16}{16}{29}$,
{\color{red}
$\color{red}\Mx{28}{17}{15}{30}_-$,
\ldots,
$\color{red}\Mx{22}{23}{9}{36}_-$,}
$\Mx{21}{24}{8}{37}$,
{\color{red}
$\color{red}\Mx{20}{25}{7}{38}_-$,
\ldots,
$\color{red}\Mx{14}{31}{1}{44}_-$}.

$\theta_3(10)=3$: 
$\Mx{24}{21}{21}{24}$.

$\theta_4(10)=-3$: 
$\Mxxx{21}{24}{24}{21}$,
$\Mxxx{18}{27}{21}{24}$,
$\Mxxx{15}{30}{18}{27}$,
$\Mxxx{12}{33}{15}{30}$,
$\Mxxx{ 9}{36}{12}{33}$,
$\Mxxx{ 6}{39}{ 9}{36}$,
$\Mxxx{ 3}{42}{ 6}{39}$,
$\Mxxx{ 0}{45}{ 3}{42}$.

$\theta_5(10)=-5$: 
$\Mx{20}{25}{25}{20}$.

\subsection{$n=12$.}
For $\theta_2(12)$, the only non-symmetric matrix with known existence comes from an equitable partition of $\Hm{12}$.
For $\theta_4(12)$, all matrices with even $b$ and $c$ were considered in Section~\ref{s:FDF}; for the nonexistence with $c=2$, see Example~\ref{ex:1}; for $c=1$, similarly.
The only known equitable partition $(H^{(1)},H^{(3)})$ 
of $\Hf{12}'$ with  $\theta_4(12)$ and odd $c$ and $b$
comes from the Hadamard code $H$ in $\Hm{12}$ 
as in Section~\ref{ss:Hm}.

$\theta_1(12)=44$:
$\Mxxx{55}{11}{11}{55}$.

$\theta_2(12)=26$:
$\Exxx{46}{20}{20}{46}$,
$\Qxxx{41}{25}{15}{51}$,
$\Exxx{36}{30}{10}{56}$,
$\Qxxx{31}{35}{5}{61}$.

$\theta_3(12)=12$:
$\Exxx{39}{27}{27}{39}$.

$\theta_4(12)=2$:
$\Exxx{34}{32}{32}{34}$,
$\Qxxx{33}{33}{31}{34}$,
$\Exxx{32}{34}{30}{34}$,
$\Qxxx{31}{35}{29}{34}$,
$\Exxx{30}{36}{28}{34}$,
$\Qxxx{29}{37}{27}{34}$,
$\Exxx{28}{38}{26}{34}$,
$\Qxxx{27}{39}{25}{34}$,
$\Exxx{26}{40}{24}{34}$,
$\Qxxx{25}{42}{23}{34}$,
$\Exxx{24}{42}{22}{34}$,
$\Qxxx{23}{42}{21}{34}$,
$\Exxx{22}{42}{20}{34}$,
$\Qxxx{21}{42}{19}{34}$,
$\Exxx{20}{42}{18}{34}$,
$\Qxxx{19}{42}{17}{34}$,
$\Exxx{18}{42}{16}{34}$,
$\Qxxx{17}{42}{15}{34}$,
$\Exxx{16}{52}{14}{34}$,
$\Qxxx{15}{52}{13}{34}$,
$\Exxx{14}{52}{12}{34}$,
$\Qxxx{13}{52}{11}{34}$,
$\Exxx{12}{52}{10}{34}$,
{\boldmath
$\Exxx{11}{52}{ 9}{34}$,
}
$\Exxx{10}{52}{ 8}{34}$,
$\Qxxx{ 9}{52}{ 7}{34}$,
$\Exxx{ 8}{52}{ 6}{34}$,
$\Qxxx{ 7}{52}{ 5}{34}$,
$\Exxx{ 6}{62}{ 4}{34}$,
$\Qxxx{ 5}{62}{ 3}{34}$,
{\color{red}
$\Mxxx{ 4}{62}{ 2}{34}_-$,
$\Mxxx{ 3}{62}{ 1}{34}_-$.
}

$\theta_5(12)=-4$:
$\Exxx{35}{31}{31}{35}$.

$\theta_6(12)=-6$:
$\Exxx{30}{36}{36}{30}$,
$\Exxx{12}{54}{18}{48}$.

\subsection*{Acknowledgments} 
The authors are grateful to Artem Panin for
sharing his experience of working with GAMS.

\nocite{Kro:24}
\nocite{FDF:PerfCol}
\nocite{FDF:12cube.en}
\nocite{FDF:CorrImmBound}

% \bibliographystyle{plain}
% \bibliography{../../k}
% \end{document}

\providecommand\href[2]{#2} \providecommand\url[1]{\href{#1}{#1}}
  \def\DOI#1{{\small {DOI}:
  \href{http://dx.doi.org/#1}{#1}}}\def\DOIURL#1#2{{\small{DOI}:
  \href{http://dx.doi.org/#2}{#1}}}

\end{document}